\theoremstyle{plain}  
\newtheorem{thm}{Theorem}[section]
\newtheorem{lem}[thm]{Lemma}   
\newtheorem{cor}[thm]{Corollary}
\theoremstyle{definition}
\newtheorem{prop-defn}[thm]{Proposition--Definition}
\newtheorem{defn}[thm]{Definition}
\theoremstyle{remark}
\newtheorem{rem}[thm]{Remark}
\newtheorem{ex}[thm]{Example}
\newtheorem{con}[thm]{Construction}
\DeclareMathOperator{\Hom}{Hom}
\DeclareMathOperator{\Hilb}{Hilb}
\DeclareMathOperator{\PGL}{PGL}
\newcommand{\QED}{\ifhmode\unskip\nobreak\fi\quad {\rm Q.E.D.}} 
\newcommand{\bC}{\mathbb C}
\newcommand{\bH}{\mathbb H}
\newcommand{\bP}{\mathbb P}
\newcommand{\bQ}{\mathbb Q}
\newcommand{\bR}{\mathbb R}
\newcommand{\bZ}{\mathbb Z}
\newcommand{\cA}{\mathcal A}
\newcommand{\cG}{\mathcal G}
\newcommand{\cO}{\mathcal O}
\newcommand{\cU}{\mathcal U}
\newcommand{\oM}{\overline{M}}
\DeclareMathOperator{\ord}{ord}
\DeclareMathOperator{\val}{val}
\newcommand{\oX}{\overline{X}}
\newcommand{\oY}{\overline{Y}}
\newcommand{\oZ}{\overline{Z}}
\DeclareMathOperator{\Gr}{Gr}
\title{The homology of tropical varieties}
\author{Paul Hacking}
\begin{document}
\maketitle
\section{Introduction}
Given a closed subvariety $X$ of an algebraic torus $T$, the associated tropical variety is a polyhedral fan in the space of $1$-parameter subgroups of the torus which describes the behaviour of the subvariety at infinity.
We show that the link of the origin has only top rational homology if a genericity condition is satisfied.
Our result is obtained using work of Tevelev \cite{T} and Deligne's theory of mixed Hodge structures \cite{D}.

Here is a sketch of the proof. We use the tropical variety of $X$ to construct a smooth compactification $X \subset \oX$ with simple normal crossing boundary $B$. We relate the link $L$ of the tropical variety to the \emph{dual complex} $K$ of $B$, that is, the simplicial complex with vertices corresponding to the irreducible components $B_i$ of $B$ and simplices of dimension $j$ corresponding to $(j+1)$-fold intersections of the $B_i$.
Following \cite{D} we identify the homology groups of $K$ with graded pieces of the weight filtration of the cohomology of $X$.
Since $X$ is an affine variety, it has the homotopy type of a CW complex of real dimension equal to the complex dimension of $X$.
From this we deduce that $K$ and $L$ have only top homology. 

The link of the tropical variety of $X \subset T$ was previously shown to have only top homology in the following cases: 
the intersection of the Grassmannian $G(3,6)$ with the big torus $T$ in its Pl\"ucker embedding \cite{SS}, 
the complement of an arrangement of hyperplanes \cite{AK}, 
and the space of matrices of rank $\le 2$ in $T=(\bC^{\times})^{m \times n}$ \cite{MY}.
We discuss these and other examples from our viewpoint in Sec.~\ref{secex}.

It has been conjectured that the link of the tropical variety of an \emph{arbitrary} subvariety of a torus is homotopy equivalent to a bouquet of spheres (so, in particular, has only top homology).
I expect that this is false in general, but I do not know a counterexample. See also Rem.~\ref{conj}.

We note that D.~Speyer has used similar techniques to study the topology of the tropicalisation of a curve defined over the field $\bC((t))$ of formal power series, see \cite[Sec.~10]{S}.

\section{Statement of Theorem}

We work throughout over $k=\bC$.
Let $X \subset T$ be a closed subvariety of an algebraic torus $T \simeq (\bC^{\times})^r$. 
Let $K=\bigcup_{n \ge 1} \bC((t^{1/n}))$ be the field of Puiseux series 
(the algebraic closure of the field $\bC((t))$ of Laurent series) and 
$\ord \colon K^{\times} \rightarrow \bQ$ the valuation of $K/\bC$ such that $\ord(t)=1$.

Let $M=\Hom(T,\bC^{\times}) \simeq \bZ^r$ be the group of characters of $T$ and $N=M^*$. 
We have a natural map
$$\val \colon T(K) \rightarrow N_{\bQ}$$
given by 
$$T(K) \ni P \mapsto (\, \chi^m \mapsto \ord(\chi^m(P)) \,).$$
In coordinates
$$(K^{\times})^r \ni (a_1,\ldots,a_r) \mapsto (\ord(a_1),\ldots,\ord(a_r)) \in \bQ^r$$

\begin{defn}\cite[1.2.1]{EKL}
The \emph{tropical variety} $\cA$ of $X$ is the closure of $\val(X(K))$ in $N_{\bR} \simeq \bR^r$.
\end{defn}

\begin{thm}\cite[2.2.5]{EKL} \label{dimension}
$\cA$ is the support of a rational polyhedral fan in $N_{\bR}$ of pure dimension $\dim X$.
\end{thm}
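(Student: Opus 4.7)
The strategy is to describe $\cA$ via Gröbner degenerations of the defining ideal $I \subset \bC[M]$ of $X$. For $w \in N_{\bQ}$ and $f = \sum_m c_m \chi^m \in \bC[M]$, set $\init_w(f) = \sum c_m \chi^m$ over the $m$ minimising $\langle w, m\rangle$, and let $\init_w(I) = (\init_w(f) : f \in I) \subset \bC[M]$.

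First I would prove the characterisation
\[
w \in \cA \cap N_{\bQ} \iff V(\init_w(I)) \cap T \neq \emptyset,
\]
equivalently $\init_w(I)$ contains no monomial. The forward direction is immediate: if $P \in X(K)$ has $\val(P) = w$, the tuple of leading Puiseux coefficients of its coordinates yields a $\bC$-point of $V(\init_w(I)) \cap T$. The converse is the analytic heart of the argument: one builds a Puiseux lift $P \in X(K)$ with $\val(P) = w$ over a given initial point by a Hensel-style successive approximation using generators of $I$. Passage from rational to real $w$ is by closure.

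For the fan structure, a finiteness argument yields a universal Gröbner basis $\{f_1, \ldots, f_N\}$ of $I$ for which $\init_w(I) = (\init_w(f_1),\ldots,\init_w(f_N))$ for every $w \in N_{\bR}$. Since $\init_w(f_j)$ depends only on which terms of $f_j$ attain the minimum of $\langle w, \cdot \rangle$, the locus where $\init_w(I)$ is constant is the relative interior of a rational polyhedral cone; the closures and faces of these cones form a rational polyhedral fan $\Sigma$ on $N_{\bR}$ (the Gröbner fan of $I$), and $\cA$ is the union of those cones on which the corresponding initial ideal contains no monomial.

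The main obstacle is pure dimensionality. For rational $w$ in the relative interior of a cone $\sigma$ of $\cA$, the algebra $\bC[M] / \init_w(I)$ is a flat degeneration of $\bC[M]/I$, so $V(\init_w(I))$ has pure dimension $d = \dim X$. This subscheme is invariant under the subtorus $T_\sigma \subset T$ whose cocharacter lattice is the lineality space of $\sigma$, which forces $\dim \sigma \leq d$. The reverse inequality for maximal $\sigma$ is the delicate point: when $\dim \sigma < d$ one must produce $v \in N_{\bQ}$ transverse to $\mathrm{span}(\sigma)$ such that $\init_v(\init_w(I))$ still contains no monomial, by a further Gröbner degeneration of $V(\init_w(I))$ along a generic direction transverse to the $T_\sigma$-orbits. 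Then $w + \epsilon v$ lies in a strictly larger cone of $\cA$, contradicting maximality of $\sigma$. Making the perturbation precise and, in particular, ensuring the resulting initial ideal remains monomial-free, is the technical crux.
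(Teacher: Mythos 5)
This theorem is quoted in the paper from \cite[2.2.5]{EKL}; the paper contains no proof of it, so there is no internal argument to compare against, and the relevant benchmark is the standard literature (Einsiedler--Kapranov--Lind, Bieri--Groves, Speyer--Sturmfels). Your route --- characterising $\cA$ by monomial-freeness of initial ideals, organising the strata by the Gr\"obner fan, and proving pure dimension by degeneration --- is exactly that standard route, so the choice of approach is sound.

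However, as written your proposal is an outline rather than a proof: the two statements that carry essentially all of the content of the theorem are named but not proved. First, the inclusion $\{w : V(\init_w(I))\cap T \neq \emptyset\} \subseteq \val(X(K))$ (Kapranov's lifting lemma) is precisely the nontrivial half of the set-theoretic description; ``a Hensel-style successive approximation'' is not automatic here, since one must lift through possibly singular points and control the value of $\val$ exactly, which in the literature is done either by a careful Newton-iteration over the Puiseux field or by general results on surjectivity of tropicalisation over algebraically closed valued fields. Second, the lower bound in the pure-dimension statement --- producing, when $\dim\sigma < \dim X$, a direction $v\notin\mathrm{span}(\sigma)$ with $\init_v(\init_w(I))$ monomial-free --- is the Bieri--Groves step, and you explicitly defer it; it is not a routine perturbation, since a ``generic transverse'' $v$ will typically give a monomial initial ideal, and the usual argument instead passes to the quotient of $V(\init_w(I))\cap T$ by the invariance subtorus and applies induction (or a transcendence-degree/valuation argument) to that quotient. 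There is also a terminological slip that matters for your upper bound: the subtorus under which $V(\init_w(I))$ is invariant, for $w$ in the relative interior of $\sigma$, is the one whose cocharacter space is the \emph{linear span} of $\sigma$ (since $\init_{w\pm\epsilon v}(I)=\init_w(I)$ for all $v\in\mathrm{span}(\sigma)$ and small $\epsilon$), not the lineality space of $\sigma$; with ``lineality space'' the inequality $\dim\sigma\le\dim X$ does not follow. Filling in these two lemmas is exactly what \cite{EKL} (and Bieri--Groves) do, so the proposal as it stands does not yet establish the theorem.
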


Let $\Sigma$ be a rational polyhedral fan in $N_{\bR}$.
Let $T \subset Y$ be the associated torus embedding.
Let $\oX=\oX(\Sigma)$ be the closure of $X$ in $Y$.

\begin{thm}\cite[2.3]{T}\label{compact}
$\oX$ is compact iff the support $|\Sigma|$ of $\Sigma$ contains $\cA$.
\end{thm}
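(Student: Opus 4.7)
The plan is to apply the valuative criterion of properness using DVR subrings of the Puiseux field $K$ as test objects. Since $\oX$ is separated and of finite type over $\bC$, compactness is equivalent to properness, and because $\bC$ is algebraically closed and every $\bQ$-valued valuation on a finitely generated extension of $\bC$ can be realised through some embedding into $K$, it suffices to extend $K$-points across the DVR $\cO_K$ (or one of its sub-DVRs adapted to a rational valuation). The central tool is the following extension criterion for the ambient toric variety $Y=Y(\Sigma)$, visible directly from the description $U_\sigma=\Spec\bC[\sigma^\vee\cap M]$: a point $P\in T(K)$ extends to an $\cO_K$-point of $Y$ iff $\val(P)\in|\Sigma|$, and in that case any cone $\sigma\ni\val(P)$ yields an extension factoring through $U_\sigma$. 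Indeed, a $K$-point of $U_\sigma$ is a $\bC$-algebra map $\bC[\sigma^\vee\cap M]\to K$, and this map extends to $\cO_K$ precisely when $\ord(\chi^m(P))\ge 0$ for every $m\in\sigma^\vee\cap M$, i.e.\ when $\val(P)\in\sigma$.

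For the direction ($\Leftarrow$), assume $\cA\subset|\Sigma|$ and test the valuative criterion. Given a $K$-point $P$ of $\oX$, if $P$ lies in the open subset $X$ then $\val(P)\in\cA\subset|\Sigma|$, so the extension criterion yields an extension of $P$ to $Y(\cO_K)$; closedness of $\oX\subset Y$ then upgrades this to an extension in $\oX(\cO_K)$, as required. If instead $P$ lies in the boundary $\oX\setminus X$, I descend inductively through the toric stratification: each orbit closure $V(\tau)\subset Y$ is itself a toric variety with cocharacter lattice $N/\langle\tau\rangle$ and fan the star of $\tau$, and $\oX\cap V(\tau)$ is the closure in $V(\tau)$ of $X\cap O(\tau)$, whose tropical variety is obtained from $\cA\cap\mathrm{star}(\tau)$ by projection. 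The hypothesis $\cA\subset|\Sigma|$ descends to the corresponding hypothesis at each stratum, closing the induction.

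For the direction ($\Rightarrow$), suppose there exists $v\in\cA\setminus|\Sigma|$. Since $|\Sigma|$ is closed in $N_\bR$ and $\val(X(K))$ is dense in $\cA$ (part of the content of Theorem \ref{dimension}), I can pick $P\in X(K)$ with $\val(P)\notin|\Sigma|$. Then $P$ gives a $K$-point of $\oX$, but by the extension criterion it does not extend to any $\cO_K$-point of $Y$, and, $\oX$ being closed in $Y$, it does not extend to $\oX$ either. This violates the valuative criterion, so $\oX$ is not compact.

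The main obstacle is the inductive boundary case of ($\Leftarrow$), which rests on a compatibility between tropicalisation and restriction to toric orbits: the tropical variety of $X\cap O(\tau)\subset T/T_\tau$ must equal the image of $\cA\cap\mathrm{star}(\tau)$ under the quotient $N_\bR\to(N/\langle\tau\rangle)_\bR$. This fact, not stated in the excerpt, follows from an explicit computation with initial terms of Puiseux expansions but is the one step demanding genuine geometric input beyond the formal use of the valuative criterion and the extension criterion for toric varieties.
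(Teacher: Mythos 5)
First, a caveat: the paper does not prove this statement at all --- it is quoted from \cite[2.3]{T} --- so your argument can only be measured against Tevelev's proof. Your direction ($\Rightarrow$) is essentially fine: since $\cA\setminus|\Sigma|$ is open in $\cA$ and $\val(X(K))$ is dense in $\cA$ (by definition of $\cA$), you get $P\in X(K)$ with $\val(P)\notin|\Sigma|$, and the toric extension criterion together with closedness of $\oX$ in $Y$ and the necessity part of the valuative criterion shows $\oX$ is not proper. The genuine gap is at the first step of ($\Leftarrow$): the claim that properness may be tested on $K$-points over $\cO_K$ because ``every $\bQ$-valued valuation on a finitely generated extension of $\bC$ can be realised through some embedding into $K$'' is false as stated. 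The valuative criterion must be checked against DVRs $R\subset L$ whose residue field may have positive transcendence degree over $\bC$; the residue field of $\cO_K$ is $\bC$, so no embedding of $L$ into $K$ over $\bC$ (nor any domination of $R$ by $\cO_K$) can induce such a valuation. (Even with residue field $\bC$, only discrete valuations arise this way, since a finitely generated subfield of $K$ lies in some $\bC((t^{1/n}))$; and if you allow non-$\bC$-linear embeddings via the Cohen structure theorem, then your later step ``$P\in X(K)$ implies $\val(P)\in\cA$'' is no longer immediate from the definition of $\cA$.) The reduction you want is in fact true, but it requires an argument you have not given --- for instance: take a compactification $\oX\subset P$ with $P$ proper, pick a boundary point, choose a curve germ through it meeting the dense open $X$, and use uniqueness of valuative limits to manufacture a $K$-point of $X$ with no limit in $\oX$. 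Once you have that lemma, only $K$-points of $X$ itself need to be extended, your toric extension criterion finishes the proof, and your entire inductive treatment of boundary $K$-points becomes unnecessary.

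That induction is, in any case, the weakest part of what you wrote: $X\cap O(\tau)$ is empty (you mean $\oX\cap O(\tau)$); the identification of $\oX\cap V(\tau)$ with the closure of $\oX\cap O(\tau)$ is not automatic (there can be components of $\oX\cap V(\tau)$ missing the open orbit); and the compatibility of tropicalisation with passage to orbit closures that you invoke is precisely the elimination-theoretic content of \cite{ST} and of \cite[2.2]{T}, i.e.\ input of the same depth as the theorem itself, which you acknowledge but do not supply. For comparison, Tevelev's proof of \cite[2.3]{T} avoids the valuative criterion for $\oX$ altogether: extend $\Sigma$ to a complete fan $\tilde{\Sigma}$, so that the closure $\tilde{X}$ of $X$ in $Y(\tilde{\Sigma})$ is automatically compact, and then use the orbit--tropicalisation correspondence ($\tilde{X}$ meets $O_{\tau}$ iff $\mathrm{relint}(\tau)$ meets $\cA$, \cite[2.2]{T}) to see that $\oX=\tilde{X}\cap Y(\Sigma)$ equals $\tilde{X}$, hence is compact, exactly when $\cA\subset|\Sigma|$. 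That route localises all the valuation theory in one cited lemma and needs no stratum-by-stratum induction; as it stands, your proposal is not a complete proof.
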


From now on we always assume that $\oX$ is compact.

\begin{thm}\cite[3.9]{ST}\cite{T2} \label{expdim}
The intersection $\oX \cap O$ is non-empty and has pure dimension equal to the expected dimension for every torus orbit 
$O \subset Y$ iff $|\Sigma|=\cA$.
\end{thm}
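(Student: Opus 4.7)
The plan is to analyse each intersection $\oX \cap O_\sigma$ locally via \emph{initial degenerations}. For a rational $v \in \sigma^\circ$, the initial degeneration $\init_v X \subset T$ is the flat limit of $X$ under the 1-parameter subgroup $\lambda_v \colon \bC^{\times} \to T$ (equivalently, the vanishing scheme of the weight-$v$ initial ideal of $X$). The basic input from tropical geometry is the equivalence $\init_v X \neq \emptyset \iff v \in \cA$, together with $\dim \init_v X = \dim X$ in that case (Bieri--Groves / Speyer--Tevelev). Moreover $\init_v X$ is invariant under the subtorus $T_\tau \subset T$ corresponding to the minimal cone $\tau$ of the canonical polyhedral fan structure on $\cA$ containing $v$ in its relative interior.

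The bridge to the orbit intersection is the identification (essentially \cite{T2}): for any $v \in \sigma^\circ$, $\oX \cap O_\sigma$ equals the image of $\init_v X$ in $O_\sigma$ under the quotient $T \twoheadrightarrow T/T_\sigma \cong O_\sigma$. When $\sigma$ is itself a cone of the tropical fan structure (so $T_\tau = T_\sigma$), this image is the geometric quotient $\init_v X / T_\sigma$, of pure dimension $\dim X - \dim \sigma$---the expected dimension.

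With this in hand, for $(\Leftarrow)$ I would assume $|\Sigma| = \cA$ and refine $\Sigma$ to be compatible with the canonical fan structure on $\cA$ (this does not change $|\Sigma|$); every $\sigma \in \Sigma$ is then a cone of the tropical fan and the orbit intersection has pure expected dimension. For $(\Rightarrow)$ I would argue by contraposition. Theorem~\ref{compact} already gives $|\Sigma| \supset \cA$, so suppose some $\sigma \in \Sigma$ has $\sigma \not\subset \cA$. Then either $\sigma^\circ \cap \cA = \emptyset$, in which case $\init_v X = \emptyset$ for every $v \in \sigma^\circ$ and consequently $\oX \cap O_\sigma = \emptyset$, contradicting non-emptiness; or $\sigma^\circ$ meets $\cA$ in the interior of some tropical cone $\tau$ of dimension $< \dim \sigma$, in which case $\init_v X$ is only $T_\tau$-invariant and its image in $O_\sigma$ has dimension at least $\dim X - \dim \tau > \dim X - \dim \sigma$, contradicting purity of expected dimension.

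The main obstacle is the full verification of Tevelev's identification with the correct dimension behaviour---in particular the strict inequality in the second case above. The cleanest route is probably via a refinement argument: pick a simultaneous refinement $\Sigma'$ of $\Sigma$ and of the canonical tropical fan, apply the clean ``tropical cone'' case to $\oX' \subset Y(\Sigma')$, and then push forward under the proper toric morphism $Y(\Sigma') \to Y(\Sigma)$ to read off dimensions of $\oX \cap O_\sigma$ from those of the relevant orbit intersections in $Y(\Sigma')$.
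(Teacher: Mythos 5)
Your proposal follows a genuinely different route from the paper (initial degenerations and Tevelev's orbit--degeneration dictionary versus the paper's direct dimension count), but as written it has real gaps in both directions. For ($\Leftarrow$), the identification of $\oX \cap O$ with the image of $\init_v X$ in the orbit is precisely the hard content, and you assume it; worse, your reduction to a refinement $\Sigma'$ compatible with a fan structure on $\cA$ proves the statement for the \emph{new} compactification $\oX' \subset Y(\Sigma')$, not for $\oX \subset Y(\Sigma)$, and the proposed pushforward along $Y(\Sigma') \to Y(\Sigma)$ does not recover the dimension bound: the orbits $O'$ of $Y(\Sigma')$ lying over an orbit $O$ of codimension $c$ correspond to cones $\sigma'$ of the refinement with relative interior inside $\sigma^{\circ}$, and these may have $\dim \sigma' < c$ (new rays interior to $\sigma$), so $\oX' \cap O'$ has dimension $\dim X - \dim\sigma' > \dim X - c$; to see that its \emph{image} in $O$ has dimension $\le \dim X - c$ you would need the fibres of $O' \to O$ to meet $\oX' \cap O'$ in positive dimension, which is essentially the unproven identification again. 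The paper avoids all of this: it uses the simplicial refinement only for \emph{non-emptiness} (where taking images costs nothing, via \cite[2.2]{T}); it gets the upper bound $\dim Z \le \dim X - c$ for each irreducible component $Z$ of $\oX \cap O$ by a compactness trick --- the closure $\oZ$ of $Z$ in $W = \overline{O}$ is compact, so by Thm.~\ref{compact} the fan of $W$ (the star of $\sigma$, of dimension $\le \dim X - c$ since $|\Sigma| = \cA$ is pure of dimension $\dim X$) contains the tropical variety of $Z$, whence the bound by Thm.~\ref{dimension}; and it gets the lower bound $\dim Z \ge \dim X - c$ from Cohen--Macaulayness of toric varieties ($O$ is cut out set-theoretically by a regular sequence of length $c$). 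No compatibility with a ``canonical'' tropical fan structure is ever needed.

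For ($\Rightarrow$), the paper simply cites \cite[3.9]{ST}, and your contrapositive sketch is not correct as stated. The case division is not exhaustive: if $\sigma \not\subseteq \cA$ but $\sigma^{\circ} \cap \cA \neq \emptyset$, the points of $\sigma^{\circ}\cap\cA$ may well lie in relative interiors of tropical cones $\tau$ with $\dim\tau \ge \dim\sigma$. Moreover the inequality ``the image of $\init_v X$ in the orbit has dimension at least $\dim X - \dim\tau$'' does not follow from $T_\tau$-invariance: invariance gives no upper bound on the dimension of the fibres $T_\sigma x \cap \init_v X$ of the quotient map, hence no lower bound on the image better than the trivial $\dim X - \dim\sigma$, so no contradiction with purity is reached. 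If you want a self-contained proof of this direction you need the sharper input of \cite[3.9]{ST} (or an honest analysis of how $\init_v X$ varies over $\sigma^{\circ}$), not just the invariance statement.
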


\begin{proof}
Suppose $|\Sigma|=\cA$.
We first show that $\oX \cap O$ is nonempty for every orbit $O \subset Y$.
Let $\Sigma' \rightarrow \Sigma$ be a strictly simplical refinement of $\Sigma$ and 
$f \colon Y' \rightarrow Y$ the corresponding toric resolution of $Y$.
Let $\oX'$ be the closure of $X$ in $Y'$.
Let $O \subset Y$ be an orbit, and $O' \subset Y'$ an orbit such that $f(O') \subseteq O$.
Then $\oX' \cap O' \neq \emptyset$ by \cite[2.2]{T}, and $f(\oX' \cap O') \subseteq \oX \cap O$,
so $\oX \cap O \neq \emptyset$ as required.

We next show that $\oX \cap O$ has pure dimension equal to the expected dimension for every orbit $O \subset Y$.
Let $O \subset Y$ be an orbit of codimension $c$.
Let $Z$ be an irreducible component of the intersection $\oX \cap O$ with its reduced induced structure.
Let $W$ be the closure of $O$ in $Y$ and $\oZ$ the closure of $Z$ in $W$.
Then, since $\oZ$ is compact, the fan of the toric variety $W$ contains the tropical variety of 
$Z \subset O$ by Thm.~\ref{compact}. We deduce that $\dim Z \le \dim X - c$ by Thm.~\ref{dimension}.
On the other hand, since toric varieties are Cohen-Macaulay, the orbit $O \subset Y$ is cut out set-theoretically
by a regular sequence of length $c$ at each point of $O$.
It follows that $\dim Z \ge \dim X - c$, so $\dim Z = \dim X - c$ as required.

The converse follows from \cite[3.9]{ST}.
\end{proof}

Here is the main result of this paper.

\begin{thm}\label{mainthm} Suppose that $|\Sigma|=\cA$ and the following condition is satisfied:
\begin{enumerate}
\item[$(*)$] For each torus orbit $O \subset Y$, $\oX \cap O$ is smooth and is connected if it has positive dimension. 
\end{enumerate}
Then the link $L$ of $0 \in \cA$ has only top reduced rational homology, i.e.,
$\tilde{H}_i(L,\bQ)=0$ for $i < \dim L = \dim X -1$.
\end{thm}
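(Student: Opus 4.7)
My plan has four stages, following the sketch in the introduction.

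First, I produce a smooth compactification $\oX$ of $X$ with simple normal crossings boundary $B$. Replacing $\Sigma$ by a smooth refinement with the same support $\cA$ (preserving condition $(*)$), I may assume $Y$ is smooth. A tangent-space dimension count then shows that if $\oX\cap O$ is smooth of the expected dimension for every orbit $O\subset Y$, then $\oX$ itself is smooth and meets the toric boundary transversely, so $B=\oX\setminus X$ is SNC with components $B_i$ indexed by the rays of $\Sigma$.

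Second, I identify the link $L$ with the dual complex $K$ of $B$ up to top simplices. By Theorem~\ref{expdim} and $(*)$, each positive-dimensional cone $\sigma\in\Sigma$ gives a unique non-empty connected stratum $\oX\cap O_\sigma$ of dimension $n-\dim\sigma$, so $K$ and $L$ agree as simplicial complexes in every dimension except possibly $n-1$, where for a maximal cone $\sigma$ the intersection $\oX\cap O_\sigma$ may be several points rather than one. Since collapsing these extra top simplices does not affect rational homology in degrees below $n-1$, we have $\tilde H_i(K,\bQ)\cong\tilde H_i(L,\bQ)$ for $i<n-1$.

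Third, I apply Deligne's theory. The weight spectral sequence for $X=\oX\setminus B$ degenerates at $E_2$, and on the row $q=2n$ the $E_1$-terms are $E_1^{-s,2n}=H^{2n-2s}(B^{(s)})(-s)$. Under $(*)$, each such term is the free rational vector space on the $(s-1)$-simplices of $K$; the Gysin differentials match the simplicial boundary maps (including the augmentation $C_0(K)\to\bQ$ at $s=1$), so this row is the augmented simplicial chain complex of $K$. Degeneration at $E_2$ thus yields
\[ \tilde H_{s-1}(K,\bQ)\;\cong\;\Gr^W_{2n}H^{2n-s}(X,\bQ). \]
Since $X$ is smooth affine of complex dimension $n$, Andreotti--Frankel implies $X$ has the homotopy type of a CW complex of real dimension $\le n$, so $H^k(X,\bQ)=0$ for $k>n$, whence $\Gr^W_{2n}H^{2n-s}(X)=0$ for $s<n$. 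This gives $\tilde H_i(K,\bQ)=0$ for $i<n-1$, and by step two the same holds for $L$.

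The technical crux is step three: one must set up Deligne's weight spectral sequence precisely, and verify that on the top row the Gysin differentials (which a priori go between top cohomologies of smooth proper subvarieties of $\oX$) coincide up to sign with the simplicial boundary maps of $K$. Once this is done, the vanishing follows immediately from Andreotti--Frankel.
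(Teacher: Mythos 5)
Your overall strategy is the paper's own (compactify, compare $L$ with the dual complex of the boundary, then Deligne's weight spectral sequence plus the Andreotti--Frankel bound), and your steps three and four are sound; your tangent-space argument for smoothness and transversality of $\oX$ is also a legitimate, more elementary substitute for the paper's route through smoothness of the multiplication map $m\colon T\times\oX\to Y$ and its pullback. The genuine gap is the parenthetical claim in step one that passing to a smooth refinement ``preserves condition $(*)$'', together with the use you make of it in step two. Smoothness of the strata does survive refinement, but connectedness does not: if $\sigma\in\Sigma$ is a maximal cone whose stratum $\oX\cap O_\sigma$ consists of $k\ge 2$ points, and the resolution $\Sigma'$ subdivides $\sigma$ (which in general it must, since non-smooth maximal cones have to be subdivided), then for a cone $\sigma'\in\Sigma'$ of dimension $p<n=\dim X$ whose relative interior lies in the interior of $\sigma$, the stratum $\oX'\cap O_{\sigma'}$ has $k$ connected components of positive dimension: by \cite[2.5]{T} one has $\oX'=\oX\times_Y Y'$, so this stratum is a torus bundle over the $k$-point set $\oX\cap O_\sigma$. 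Hence for the fan you actually work with, the assertion that each positive-dimensional cone gives a unique connected stratum is false, extra simplices of the dual complex $K$ occur in every dimension over the subdivided maximal cones, and your step two no longer yields $\tilde H_i(K,\bQ)\cong\tilde H_i(L,\bQ)$ for $i<n-1$. In short, the reduction ``WLOG $Y$ is smooth and $(*)$ still holds'' is not available.

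The paper closes exactly this point by a more careful comparison: give $L$ the polyhedral structure induced by the original fan $\Sigma$, replace each maximal cell by $k=|\oX\cap O_\sigma|$ copies glued along their common boundary to get $\hat L$ (so $\hat L$ is homotopy equivalent to the one-point union of $L$ with top-dimensional spheres), and then check that the subdivision $\hat L'$ of $\hat L$ induced by $\Sigma'$ is the dual complex of the boundary of the refined compactification; connectedness is invoked only for cones of $\Sigma$ of dimension $<n$. Your argument is repaired by inserting this comparison in place of your step two. A smaller related omission: your tangent-space argument needs the strata of the \emph{refined} compactification to be smooth of the expected dimension, which also requires justification --- either via the identification $\oX'=\oX\times_Y Y'$ from \cite[2.5]{T} (available here because $(*)$ and Lem.~\ref{smoothness} make $\oX\subset Y$ tropical), or directly from Thm.~\ref{expdim} applied to $\Sigma'$ together with the observation that a closed subscheme of pure dimension $n-\dim\sigma'$ of the smooth equidimensional scheme $(\oX\cap O_\sigma)\times_{O_\sigma}O_{\sigma'}$ must be a union of its connected components.
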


\begin{ex} \label{exci}
Let $\oY$ be a projective toric variety.
Let $\oX \subset \oY$ be a complete intersection.
That is, $\oX= H_1 \cap \cdots \cap H_c$ where $H_i$ is an ample divisor on $\oY$.
Assume that $H_i$ is a general element of a basepoint free linear system for each $i$.
Let $Y \subset \oY$ be the open toric subvariety consisting of orbits meeting $\oX$ and $\Sigma$ the fan of $Y$.
Then $|\Sigma|=\cA$ by Thm.~\ref{expdim} and $\oX \subset Y$ satisifes the condition $(*)$ by Bertini's theorem \cite[III.7.9, III.10.9]{H}.

If $\overline{\Sigma}$ is the (complete) fan of $\oY$, the fan $\Sigma$ is the union of the cones of $\overline{\Sigma}$ of codimension $\ge c$.
So it is clear in this example that the link $L$ of $0 \in \cA$ has only top reduced homology.
Indeed, let $r= \dim Y$. Then the link $K$ of $0 \in \overline{\Sigma}$ is a polyhedral subdivision of the $(r-1)$-sphere, and
$L$ is the $(r-c-1)$-skeleton of $K$, hence $\tilde{H}_i(L,\bZ)=\tilde{H}_i(S^{r-1},\bZ)=0$ for $i<r-c-1$. 
\end{ex}

A useful reformulation of condition $(*)$ is given by the following lemma. 

\begin{lem} \label{smoothness}
Assume that $|\Sigma|=\cA$.
Then the following conditions are equivalent.
\begin{enumerate}
\item $\oX \cap O$ is smooth for each orbit $O \subset Y$.
\item The multiplication map $m \colon T \times \oX \rightarrow Y$ is smooth.
\end{enumerate}
\end{lem}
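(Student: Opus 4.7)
The plan is to invoke Tevelev's faithful flatness theorem \cite{T} to reduce the equivalence to a direct calculation of the fibers of $m$. Under the hypothesis $|\Sigma|=\cA$, the multiplication map $m \colon T \times \oX \to Y$ is automatically faithfully flat; granted this, smoothness of $m$ becomes equivalent to smoothness of its geometric fibers, and both directions of the lemma then follow by inspecting those fibers orbit-by-orbit.

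The direction $(2) \Rightarrow (1)$ is immediate from this reduction. For any orbit $O \subset Y$, the $T$-invariance of $O$ forces $tx \in O$ iff $x \in O$, so $m^{-1}(O) = T \times (\oX \cap O)$. Since $m$ is smooth and $O \subset Y$ is a smooth locally closed subscheme, the preimage $T \times (\oX \cap O)$ is smooth, and hence so is $\oX \cap O$.

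For $(1) \Rightarrow (2)$, by flatness it suffices to verify that each geometric fiber of $m$ is smooth of the correct dimension $\dim X$. The fiber over $y \in O$, with $O$ of codimension $c$, consists of pairs $(t,x) \in T \times \oX$ with $tx = y$; since $tx \in O$ forces $x \in O$, the second projection realises this fiber as a torsor for the $c$-dimensional stabilizer subtorus $\stab_T(y) \subset T$ over $\oX \cap O$. Smoothness of $\oX \cap O$ then yields a smooth fiber of dimension $c + (\dim X - c) = \dim X$, and $m$ is smooth. The one non-routine step in this argument is the flatness of $m$, which is precisely where the hypothesis $|\Sigma|=\cA$ (and not merely $|\Sigma| \supseteq \cA$) enters in an essential way; without it neither the equidimensionality of the fibers nor the surjectivity needed for faithful flatness would be available.
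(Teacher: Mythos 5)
The direction $(2)\Rightarrow(1)$ of your argument is fine and is essentially the paper's: the fibre (or orbit preimage) of $m$ is identified with $(\oX \cap O)\times S$ for $S$ the stabiliser, and smoothness of $m$ passes to $\oX\cap O$. The gap is in $(1)\Rightarrow(2)$, at the very step you flag as "non-routine": there is no "faithful flatness theorem" in \cite{T} asserting that $|\Sigma|=\cA$ alone makes $m\colon T\times\oX\to Y$ flat. In Tevelev's terminology flatness of $m$ is part of the \emph{definition} of a tropical compactification; what he proves is that \emph{some} fan supported on $\cA$ yields a flat $m$ (and that refinements of tropical fans stay tropical), not that \emph{every} fan with $|\Sigma|=\cA$ does. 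What the hypothesis $|\Sigma|=\cA$ actually buys, via Thm.~\ref{expdim}, is exactly surjectivity of $m$ and equidimensionality of its fibres; that is strictly weaker than flatness (compare a finite, equidimensional, non-flat map onto a normal variety, where the failure is detected by non-reduced fibres). So your reduction "flat, hence smooth iff fibres are smooth" assumes precisely what still needs to be proved, and the hypothesis (1) is never used to establish it.

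The fix is the paper's route, and it shows where (1) genuinely enters: since the fibre over $y\in O$ is $(\oX\cap O)\times S$, hypothesis (1) makes every fibre smooth, in particular \emph{reduced}; combined with equidimensionality and surjectivity from Thm.~\ref{expdim}, and with $T\times\oX$ integral and $Y$ normal, the criterion of \cite[14.4.4, 15.2.3]{EGA4} (a dominant morphism from an integral variety to a normal variety with reduced fibres is flat iff its fibres are equidimensional) gives flatness of $m$; then flatness together with smoothness of the fibres gives smoothness of $m$. In other words, flatness is a conclusion drawn from (1), not an automatic consequence of $|\Sigma|=\cA$, and without the reducedness supplied by (1) (or some substitute, e.g. miracle flatness, which is unavailable here since $T\times\oX$ is not known to be Cohen--Macaulay and $Y$ need not be regular) your argument does not close.
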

\begin{proof}
The fibre of the multiplication map over a point $y \in O \subset Y$ is isomorphic to $(\oX \cap O) \times S$,
where $S \subset T$ is the stabiliser of $y$.
Now $m$ is smooth iff it is flat and each fibre is smooth.
The map $m$ is surjective and has equidimensional fibres by Thm.~2.4.
Finally, if $W$ is integral, $Z$ is normal, and $f \colon W \rightarrow Z$ is dominant and has reduced fibres, then $f$ is flat iff it 
has equidimensional fibres by \cite[14.4.4, 15.2.3]{EGA4}.
This gives the equivalence.
\end{proof}

\begin{defn}\cite[1.1,1.3]{T}
We say $\oX \subset Y$ is \emph{tropical} if $m \colon T \times \oX \rightarrow Y$ is flat and surjective.
(Then in particular $\oX \cap O$ is non-empty and has the expected dimension for each orbit $O \subset Y$, so $|\Sigma|=\cA$ by Thm.~\ref{expdim}.)
We say $X \subset T$ is \emph{sch\"on} if $m$ is smooth for some (equivalently, any \cite[1.4]{T}) tropical compactification $\oX \subset Y$.
\end{defn}

\begin{ex}
Here we give some examples of sch\"on subvarieties of tori. (For more examples see Sec.~\ref{secex}.)
\begin{enumerate}
\item Let $\oY$ be a projective toric variety and $\oX \subset \oY$ a general complete intersection as
in Ex.~\ref{exci}. Let $T \subset \oY$ be the big torus and $X = \oX \cap T$.
Then $\oX \cap O$ is either empty or smooth of the expected dimension for every orbit $O \subset \oY$ by Bertini's theorem. Hence $X \subset T$ is sch\"on.
\item Let $\oY$ be a projective toric variety and $G$ a group acting transitively on $\oY$.
Let $\oX \subset \oY$ be a smooth subvariety. Then, for $g \in G$ general, $g\oX \cap O$ is either empty or smooth of the expected dimension for every orbit $O \subset Y$ by \cite[III.10.8]{H}.
Let $T \subset \oY$ be the big torus and $X' = g\oX \cap T$. Then $X' \subset T$ is sch\"on for $g \in G$ general.
\end{enumerate}
\end{ex}

\begin{ex} \label{notschon}
Here is a simple example $X \subset T$ which is not sch\"on.
Let $\oY$ be a projective toric variety and $\oX \subset \oY$ a closed subvariety such that 
$\oX$ meets the big torus $T \subset \oY$ and $\oX$ is singular at a point which is contained 
in an orbit $O \subset \oY$ of codimension $1$.
Let $X = \oX \cap T$. Then $X \subset T$ is not sch\"on.
Indeed, suppose that $m \colon T \times \oX' \rightarrow Y'$ is smooth for some tropical compactification $\oX' \subset Y'$. We may assume that the toric birational map 
$f \colon Y' \dashrightarrow \oY$ is a morphism by \cite[2.5]{T}.
Now $\oX \cap O$ is singular by construction, and $f \colon Y' \rightarrow \oY$ is an isomorphism over $O$ because $O \subset \oY$ has codimension $1$, hence $\oX' \cap f^{-1}O$ is also singular, a contradiction. 
\end{ex}

\begin{rem} \label{conj}
It has been suggested that the link $L$ of the tropical variety of an \emph{arbitrary} subvariety of a torus is homotopy equivalent to a bouquet of top dimensional spheres (so, in particular, has only top homology). I expect that this is false in general, but I do not know a counterexample.
However, there are many examples where the hypothesis $(*)$ of Thm.~\ref{mainthm} is not satisfied but the conclusion is still valid. For example, let $\oX \subset \oY$ be a complete intersection in a projective toric variety such that $\oX \cap O$ has the expected dimension for each orbit 
$O \subset \oY$ and let $X= \oX \cap T \subset T$ where $T \subset \oY$ is the big torus. 
Then $X \subset T$ is not sch\"on in general but $L$ is a bouquet of top-dimensional spheres, cf. Ex.~\ref{notschon}, \ref{exci}. See also Ex.~4.4 for another example.
\end{rem}

\begin{con} \label{tropconstruct}\cite[1.7]{T}
We can always construct a tropical compactification $\oX \subset Y$ as follows.
Choose a projective toric compactification $\oY_0$ of $T$.
Let $\oX_0$ denote the closure of $X$ in $\oY_0$.
Assume for simplicity that $$S=\{ t \in T \ | \ t \cdot X =X \} \subset T$$ is trivial
(otherwise, we can pass to the quotient $X/S \subset T/S$).
Consider the embedding 
$T \hookrightarrow \Hilb(\oY_0)$ given by $t \mapsto t^{-1}[\oX_0]$.
Let $\oY$ be the normalisation of the closure of $T$ in $\Hilb(\oY_0)$.
(So $\oY$ is a projective toric compactification of $T$.)
Let $\oX$ be the closure of $X$ in $\oY$, and $Y \subset \oY$ the open toric subvariety consisting of orbits meeting $\oX$.
Let $\cU \subset \Hilb(\oY_0) \times \oY_0$ denote the universal family over $\Hilb(\oY_0)$ and 
$\cU^0 =\cU \cap (\Hilb(\oY_0) \times T)$.
One shows that there is an identification
\begin{equation} \label{multuniv}
\xymatrix{
T \times X \ar[rr]^{\sim} \ar[rd]_m & & \cU^0|_Y \ar[ld]\\
& Y}
\end{equation}
given by $(t,x) \mapsto (tx,t)$ \cite[p.~1093, Pf. of 1.7]{T}. In particular, $m$ is flat.
\end{con}

\begin{rem}
We note that, in the situation of \ref{tropconstruct}, we can verify the condition $(*)$ using Gr\"obner basis techniques.
Let $O \subset Y$ be an orbit.
Let $\sigma$ be the cone in the fan of $Y$ corresponding to $O$, and $w \in N$ an integral point in the relative interior of $\sigma$.
We regard $w$ as a 1-parameter subgroup $\bC^{\times} \rightarrow T$ of $T$.
Then, by construction, the limit $\lim_{t \rightarrow 0} w(t)$ lies in the orbit $O$. 
Let $\oX_0^w$ be the flat limit of the 1-parameter family $w(t)^{-1}\oX_0$ as $t \rightarrow 0$.
Then the fibres of $\cU \rightarrow \Hilb(\oY_0)$ over $O$ are the translates of $\oX_0^w$.
Let $y \in O$ be a point and $S \subset T$ the stabiliser of $y$.
The fibre of $m$ over $y$ is isomorphic to both $(\oX \cap O) \times S$ and $\oX_0^w \cap T$ (by the identification (\ref{multuniv})).
Hence $\oX \cap O$ is smooth (resp. connected) iff $\oX_0^w \cap T$ is so.
Suppose now that $\oY_0 \simeq \bP^N$, and let $I \subset k[X_0,\ldots,X_N]$ be the homogeneous ideal of $\oX_0 \subset \bP^N$.
Then $\oX_0^w$ is the zero locus of the initial ideal of $I$ with respect to $w$.
\end{rem}

\section{The stratification of the boundary and the weight filtration}

Let $\oX$ be a smooth projective variety of dimension $n$, and $B \subset \oX$ a simple normal crossing divisor.
We define the \emph{dual complex} of $B$ to be the CW complex $K$ defined as follows.
Let $B_1, \ldots, B_m$ be the irreducible components of $B$ and write $B_I = \bigcap_{i \in I } B_i$ for $I \subset [m]$.
To each connected component $Z$ of $B_I$ we associate a simplex $\sigma$ with vertices labelled by $I$.
The facet of $\sigma$ labelled by $I\setminus\{i\}$ is identified with the simplex corresponding to the connected component of 
$B_{I \setminus \{ i \}}$ containing $Z$.

\begin{thm} \label{homK}
The reduced homology of $K$ is identified with the top graded pieces of the weight filtration on the cohomology of 
the complement $X = \oX \backslash B$. Precisely, 
$$\tilde{H}_{i}(K,\bC)=\Gr^W_{2n} H^{2n-(i+1)}(X,\bC).$$
\end{thm}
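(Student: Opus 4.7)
The plan is to invoke Deligne's weight spectral sequence for $H^{\ast}(X,\bC)$ and to identify the $q=2n$ row with the augmented simplicial chain complex of $K$ via Poincar\'e duality.

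Write $B^{(p)} = \bigsqcup_{|I|=p} B_I$ for the disjoint union of the $p$-fold intersections (with the convention $B^{(0)}=\oX$), a smooth projective variety of pure dimension $n-p$. By \cite{D} there is a spectral sequence
\[
E_1^{-p,q} = H^{q-2p}\bigl(B^{(p)},\bC\bigr)(-p) \;\Longrightarrow\; H^{q-p}(X,\bC)
\]
whose abutment realises the weight filtration and which degenerates at $E_2$, so that $\Gr^W_q H^{q-p}(X,\bC) = E_2^{-p,q}$. The differential $d_1$ is the alternating sum over $i\in I$ of the Gysin maps along the inclusions $B_I \hookrightarrow B_{I\setminus\{i\}}$.

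Now I would specialise to the row $q=2n$. Since $B^{(p)}$ is smooth projective of pure dimension $n-p$, Poincar\'e duality identifies $H^{2n-2p}(B^{(p)},\bC)$ with the $\bC$-vector space freely generated by $\pi_0(B^{(p)})$ via fundamental cohomology classes. By construction of $K$, these connected components are in bijection with the $(p-1)$-simplices of $K$, and $E_1^{0,2n} = H^{2n}(\oX,\bC) = \bC$ plays the role of the augmentation term (the unique $(-1)$-simplex). On these top classes the Gysin map along $B_I \hookrightarrow B_{I\setminus\{i\}}$ sends the fundamental class of a component $Z$ of $B_I$ to that of the unique component of $B_{I\setminus\{i\}}$ containing $Z$, which by definition of $K$ is the facet of the simplex associated to $Z$ obtained by deleting the vertex labelled $i$. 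With Deligne's alternating signs, $d_1$ therefore matches the simplicial boundary
\[
\partial \colon C_{p-1}(K,\bC) \longrightarrow C_{p-2}(K,\bC).
\]
Hence the $q=2n$ row of $E_1$ is precisely the augmented simplicial chain complex of $K$, so $E_2^{-p,2n} \cong \tilde{H}_{p-1}(K,\bC)$. Setting $p=i+1$ yields the claimed identification.

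The step requiring the most care is the sign bookkeeping in matching $d_1$ with $\partial$: one must trace Deligne's Poincar\'e residue description of the graded pieces of the weight filtration on $\Omega^{\bullet}_{\oX}(\log B)$ through the construction of $d_1$ and reconcile it with the standard simplicial sign convention on $K$. The rest of the argument is essentially formal once the weight spectral sequence and its $E_2$-degeneration are invoked.
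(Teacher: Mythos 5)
Your proposal is correct and follows essentially the same route as the paper's proof: Deligne's weight spectral sequence for $\Omega^{\bullet}_{\oX}(\log B)$, identification of $E_1$ via the Poincar\'e residue with cohomology of the $p$-fold intersections and of $d_1$ with signed Gysin maps, Poincar\'e duality to read the $q=2n$ row as the augmented chain complex of $K$, and $E_2$-degeneration to extract $\Gr^W_{2n}$. No substantive differences to report.
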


\begin{cor} \label{affine}
If $X$ is affine, then
$$\tilde{H}_{i}(K,\bC)=\left\{ \begin{array}{cc}
			\Gr^W_{2n} H^{n}(X,\bC) & \mbox{ if } i=n-1 \\
			0                      & \mbox{otherwise}\\
			\end{array} \right.$$
\end{cor}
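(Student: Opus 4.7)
The plan is to deduce this directly from Theorem \ref{homK} together with the classical Andreotti--Frankel theorem, which asserts that a smooth affine variety of complex dimension $n$ has the homotopy type of a CW complex of real dimension at most $n$. In particular, $H^{j}(X,\bC)=0$ for $j>n$.

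Writing $j = 2n-(i+1)$, the formula of Theorem \ref{homK} reads $\tilde{H}_{i}(K,\bC)=\Gr^{W}_{2n}H^{j}(X,\bC)$. When $i<n-1$ we have $j>n$, and so $H^{j}(X,\bC)=0$ by Andreotti--Frankel; hence all of its weight graded pieces vanish, and in particular $\tilde{H}_{i}(K,\bC)=0$. When $i=n-1$ the formula specialises to $\Gr^{W}_{2n}H^{n}(X,\bC)$, which is the desired identification. (For $i\ge n$ there is nothing to check, since $K$ has real dimension at most $n-1$: because $B$ has simple normal crossings in the $n$-dimensional variety $\oX$, at most $n$ components of $B$ can meet simultaneously.)

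There is no real obstacle here: the corollary is a direct substitution, and the only ingredient beyond Theorem \ref{homK} is the standard vanishing $H^{j}(X,\bC)=0$ for $j>n$ on a smooth affine variety.
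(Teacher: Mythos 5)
Your proof is correct and is essentially the paper's own argument: the paper deduces the corollary from Theorem \ref{homK} by noting that an affine variety (here $X=\oX\setminus B$, which is smooth) has the homotopy type of a CW complex of real dimension $n$, so $H^{k}(X,\bC)=0$ for $k>n$ — exactly the Andreotti--Frankel vanishing you invoke, followed by the same substitution $j=2n-(i+1)$.
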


\begin{proof}[Proof of Thm.~\ref{homK}]
This is essentially contained in \cite{D}, see also \cite[Sec.~8.4]{V}.
Define a filtration $\tilde{W}$ of the complex $\Omega_{\oX}^{\cdot}(\log B)$ of differential forms on ${\oX}$ with logarithmic 
poles along $B$ by
$$\tilde{W}_l \Omega^k_{\oX}(\log B)= \Omega^l_{\oX}(\log B) \wedge \Omega_{\oX}^{k-l}.$$
The filtration of  $\Omega_{\oX}^{\cdot}(\log B)$ yields a spectral sequence 
$$E_1^{p,q}=\bH^{p+q}({\oX},\Gr^{\tilde{W}}_{-p}\Omega^{\cdot}_{\oX}(\log B)) \implies \bH^{p+q}(\Omega^{\cdot}_{\oX}(\log B))=H^{p+q}(X,\bC).$$
which defines a filtration $\tilde{W}$ on $H^{\cdot}(X,\bC)$. 
The \emph{weight filtration} $W$ on $H^i(X,\bC)$ is by definition the shift $W=\tilde{W}[i]$, i.e.,
$W_j(H^i)=\tilde{W}_{j-i}(H^i)$.
The spectral sequence degenerates at $E_2$ 
\cite[3.2.10]{D}, so 
$$E_2^{p,q} = \Gr^{\tilde{W}}_{-p} H^{p+q}(X,\bC).$$
The $E_1$ term may be computed as follows. 
Let $\tilde{B}^l$ denote the disjoint union of the $l$-fold intersections of the components of $B$, 
and $j_l$ the map $\tilde{B^l} \rightarrow {\oX}$. (By convention $\tilde{B}^0={\oX}$.)
The Poincar\'{e} residue map defines an isomorphism
\begin{eqnarray}
\Gr^{\tilde{W}}_l \Omega^k_{\oX}(\log B) \stackrel{\sim}{\longrightarrow} {j_l}_{*} \Omega_{\tilde{B^l}}^{k-l},
\end{eqnarray}
see \cite[Prop.~8.32]{V}.
This gives an identification
$$E_1^{p,q}=\bH^{p+q}({\oX},\Gr^{\tilde{W}}_{-p} \Omega_{\oX}^{\cdot}(\log B)) 
= \bH^{2p+q}(\tilde{B}^{(-p)}, \Omega^{\cdot}_{\tilde{B}^{(-p)}}) = H^{2p+q}(\tilde{B}^{(-p)},\bC).$$
The differential
$$d_1 \colon H^{2p+q}(\tilde{B}^{(-p)}) \rightarrow H^{2(p+1)+q}(\tilde{B}^{(-p-1)})$$
is identified (up to sign) with the Gysin map on components \cite[Prop.~8.34]{V}.
Precisely, write $s=-p$.
Then $d_1 \colon H^{q-2s}(\tilde{B}^{(s)}) \rightarrow H^{q-2(s-1)}(\tilde{B}^{(s-1)})$ is given by the maps
$$(-1)^{s+t}j_{*} \colon H^{q-2s}(B_{I}) \rightarrow H^{q-2(s-1)}(B_{J}),$$
where $I=\{i_1<\cdots<i_s\}$, $J=I \backslash \{i_t\}$, $j$ denotes the inclusion $B_{I} \subset B_{J}$, and 
$j_*$ is the Gysin map.
Equivalently, identify $H^{q-2s}(\tilde{B}^{(s)})=H_{2n-q}(\tilde{B}^{(s)})$ by Poincar\'{e} duality. Then
$d_1 \colon H_{2n-q}(\tilde{B}^{(s)}) \rightarrow H_{2n-q}(\tilde{B}^{(s-1)})$ is given by the maps
$$(-1)^{s+t}j_* \colon H_{2n-q}(\tilde{B}^{(s)}) \rightarrow H_{2n-q}(\tilde{B}^{(s-1)}).$$
So, the $E_1$ term of the spectral sequence is as follows.
\begin{eqnarray*}
\renewcommand{\arraystretch}{1.5}
\begin{array}{cccccccccc}
H_0(\tilde{B}^{(n)}) & \rightarrow & H_0(\tilde{B}^{(n-1)}) & \rightarrow & \cdots & \rightarrow & H_0(\tilde{B}^{(1)}) & \rightarrow & H_0(\tilde{B}^{(0)}) \\
                     &             & H_1(\tilde{B}^{(n-1)}) & \rightarrow & \cdots & \rightarrow & H_1(\tilde{B}^{(1)}) & \rightarrow & H_1(\tilde{B}^{(0)}) \\
                     &             &                        &             &        &             & \vdots               &             & \vdots \\
                     &             &                        &             &        &             &
H_{2n-2}(\tilde{B}^{(1)}) & \rightarrow & H_{2n-2}(\tilde{B}^{(0)}) \\                    
                     &             &                        &             &        &             &                     &             & H_{2n-1}(\tilde{B}^{(0)}) \\
                     &             &                        &             &        &             &                     &             & H_{2n}(\tilde{B}^{(0)}) \\
\end{array}
\end{eqnarray*}
The top row ($q=2n$) is the complex 
$$ \cdots \rightarrow H_{0}(\tilde{B}^{(s+1)}) \rightarrow H_{0}(\tilde{B}^{(s)}) \rightarrow H_{0}(\tilde{B}^{(s-1)}) \rightarrow \cdots, $$
which computes the reduced homology of the dual complex $K$ of $B$.
We deduce
$$\Gr_{s}^{\tilde{W}}H^{2n-s}(X,\bC) = \tilde{H}_{s-1}(K,\bC).$$
\end{proof}

\begin{proof}[Proof of Corollary~\ref{affine}]
If $X$ is affine then $X$ has the homotopy type of a CW complex of dimension $n$, so $H^k(X,\bC)=0$ for $k > n$.
\end{proof}

\begin{proof}[Proof of Thm.~\ref{mainthm}]
By our assumption and Lem.~\ref{smoothness} the multiplication map $m \colon T \times \oX \rightarrow Y$ is smooth.
Let $Y' \rightarrow Y$ be a toric resolution of $Y$ given by a refinement $\Sigma'$ of $\Sigma$.
Then $m' \colon T \times \oX' \rightarrow Y'$ is also smooth --- it is the pullback of $m$ 
\cite[2.5]{T}.
So $\oX'$ is smooth with simple normal crossing boundary $B'=\oX'\setminus X$
(because this is true for $Y'$).
Hence the dual complex $K$ of $B'$ has only top reduced rational homology by Cor.~\ref{affine}.

It remains to relate $K$ and the link $L$ of $0 \in \cA$.
Recall that the fan $\Sigma$ of $Y$ has support $\cA$.
The cones of $\Sigma$ of dimension $p$ correspond to toric strata $Z \subset Y$ of codimension 
$p$. These correspond to strata $Z \cap \oX \subset \oX$ of codimension $p$, which are connected
(by our assumption) unless $p = \dim \oX$. We can now construct $K$ from $L$ as follows.
Give $L$ the structure of a polyhedral complex induced by the fan $\Sigma$.
For each top dimensional cell, let $Z \subset Y$ be the corresponding toric stratum, and 
$k=|Z \cap \oX|$. We replace the cell by $k$ copies, identified along their boundaries.
Let $\hat{L}$ denote the resulting CW complex. Note immediately that $\hat{L}$ is homotopy equivalent to the one point union of $L$ and a collection
of top dimensional spheres.
So $\hat{L}$ has only top reduced rational homology iff $L$ does.
Finally let $\hat{L}'$ denote the subdivision of $\hat{L}$ induced by the refinement $\Sigma'$
of $\Sigma$. Then $\hat{L}'$ is the dual complex $K$ of $B'$. This completes the proof.
\end{proof}

We note the following corollary of the proof.

\begin{cor}\label{connected}
In the situation of Thm.~\ref{mainthm},
if in addition $\oX \cap O$ is connected for every orbit $O \subset Y$, 
then we have an identification $$\tilde{H}_{n-1}(L,\bC)=\Gr^W_{2n}H^n(X,\bC).$$
\end{cor}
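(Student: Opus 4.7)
The plan is to revisit the proof of Theorem \ref{mainthm} and observe that under the additional connectivity hypothesis the auxiliary CW complex $\hat{L}$ constructed there simplifies to $L$ itself. Recall that $\hat{L}$ is obtained from $L$ (regarded as a polyhedral complex with cell structure induced by $\Sigma$) by replacing each top dimensional cell by $k = |Z \cap \oX|$ copies glued along their shared boundary, where $Z$ is the toric stratum corresponding to the cell. The top cells correspond to cones of $\Sigma$ of dimension $n$, for which $Z \cap \oX$ is zero dimensional and smooth (hence reduced) by $(*)$; the extra hypothesis forces $k = 1$ in every such case, so no replication occurs and $\hat{L} = L$.

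Next I would subdivide $L$ according to the refinement $\Sigma' \to \Sigma$ and denote the result $L'$. Since $L'$ is merely a subdivision of $L$, the two polyhedral complexes share their underlying topological space and hence have equal reduced rational homology. The proof of Theorem \ref{mainthm} identifies $\hat{L}' = L'$ with the dual complex $K$ of the simple normal crossing boundary $B' = \oX' \setminus X$ on the toric resolution $\oX'$ of $\oX$, which is smooth with $B'$ simple normal crossing as established there.

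Finally I would invoke Corollary \ref{affine} applied to the affine variety $X$ together with its smooth compactification $\oX'$ to obtain
$$\tilde{H}_{n-1}(K,\bC) = \Gr^W_{2n} H^n(X,\bC),$$
and then chain the identifications $L \cong L' = K$ to conclude. There is no serious obstacle; the only point worth flagging is that the identification $L \cong K$ is induced by polyhedral subdivision and passage to dual simplices on a common underlying space, so the resulting isomorphism on homology is precisely the one furnished by the Hodge theoretic construction of Theorem \ref{homK}, and no additional compatibility check is required.
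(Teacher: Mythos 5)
Your argument is correct and is exactly the paper's intended one: the corollary is stated as a consequence of the proof of Thm.~\ref{mainthm}, where the only loss of information comes from replacing top-dimensional cells by $k=|Z\cap\oX|$ copies, and the extra connectedness hypothesis forces $k=1$ so that $\hat{L}=L$ and $K$ is a subdivision of $L$, whence Cor.~\ref{affine} gives the identification. No gaps.
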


\section{Examples} \label{secex}

We say a variety $X$ is \emph{very affine} if it admits a closed embedding in an algebraic torus.
If $X$ is very affine, the \emph{intrinsic torus} of $X$ is the torus $T$ with character lattice
$M=H^0(\cO_X^{\times})/k^{\times}$.
Choosing a splitting of the exact sequence
$$0 \rightarrow k^{\times} \rightarrow H^0(\cO_X^{\times}) \rightarrow M \rightarrow 0$$
defines an embedding $X \subset T$, and any two such are related by a translation.

\begin{ex} \label{hyperplanes}
Let $X$ be the complement of an arrangement of $m$ hyperplanes in $\bP^{n}$
whose stabiliser in $\PGL(n)$ is finite. 
Then $X$ is very affine with intrinsic torus $T= (\bC^{\times})^m /\bC^{\times}$, and the embedding $X \subset T$ is the restriction of the linear embedding $\bP^{n} \subset \bP^{m-1}$
given by the equations of the hyperplanes.
The embedding $X \subset T$ is sch\"on, and a tropical compactification 
$\oX \subset Y$ is given by Kapranov's visible contour construction, see \cite[Sec.~2]{HKT1}.
In \cite{AK} it was shown that the link $L$ of $0 \in \cA$ has only top reduced homology, and the rank of $H_{n-1}(L,\bZ)$ was computed using the M\"obius function of the lattice of flats of the matroid associated to the arrangement.
Thm.~\ref{mainthm} gives a different proof that the link has only top reduced rational homology.
Moreover, in this case $\oX \cap O$ is connected for every orbit $O \subset Y$, and the mixed Hodge structure on $H^{i}(X,\bC)$ is pure of weight $2i$ for each $i$. 
So we have an identification $$\tilde{H}_{n-1}(L,\bC)=\Gr^W_{2n} H^n(X,\bC)=H^n(X,\bC)$$
by Cor.~\ref{connected}.
\end{ex}

\begin{ex}
Let $X=M_{0,n}$, the moduli space of $n$ distinct points on $\bP^1$. The variety $X$ can be realised as the complement of a hyperplane arrangement in 
$\bP^{n-3}$, in particular it is very affine and the embedding $X \subset T$ in its intrinsic torus is sch\"on by Ex.~\ref{hyperplanes}.

More generally, consider the moduli space $X=X(r,n)$ of $n$ hyperplanes in linear general position in $\bP^{r-1}$. The Gel'fand--MacPherson correspondence identifies $X(r,n)$ 
with the quotient $G^0(r,n)/H$, where
$G^0(r,n) \subset G(r,n)$ is the open subset of the Grassmannian where all Pl\"ucker coordinates are nonzero and $H=(\bC^{\times})^n/\bC^{\times}$ is the maximal torus which acts freely on 
$G^0(r,n)$. See \cite[2.2.2]{GeM}. Thus the tropical variety $\cA$ of $X(r,n)$ is identified 
(up to a linear space factor) with the tropical Grassmannian $\cG(r,n)$ studied in \cite{SS}.
In particular, for $r=2$, the tropical variety of $M_{0,n}$ corresponds to $\cG(2,n)$, 
the so called space of phylogenetic trees. 
For $(r,n)=(3,6)$, the link $L$ of $0 \in \cA$ has only top reduced homology, and the top homology is free of rank $126$ \cite[5.4]{SS}.
Jointly with Keel and Tevelev, we showed that the embedding $X \subset T$ of $X(3,6)$ in its intrinsic torus is sch\"on (using work of Lafforgue \cite{L}) and described a tropical compactification $\oX \subset Y$ explicitly. So Thm.~\ref{mainthm} gives an alternative proof that
$L$ has only top reduced rational homology. Moreover, $\oX \cap O$
is connected for each orbit $O \subset Y$, and the mixed Hodge structure on $H^i(X(3,6),\bC)$ is pure of weight $2i$ for each $i$ by \cite[10.22]{HM}. So by Cor.~\ref{connected} we have an identification
$$H_{d-1}(L,\bC)=\Gr^W_{2d}H^d(X(3,6),\bC)=H^d(X(3,6),\bC)$$
where $d= \dim X(3,6)=4$. This agrees with the computation of $H^{\cdot}(X,\bC)$ in \cite{HM}.

We note that it is conjectured \cite[1.14]{KT} that $X(3,7)$ and $X(3,8)$ are sch\"on,
but in general the compactifications of $X(r,n)$ we obtain by toric methods will be highly singular by \cite[1.8]{L}. The cases $X(3,n)$ for $n \le 8$ are closely related to moduli spaces of
del Pezzo surfaces, see Ex.~\ref{dp} below
\end{ex}

\begin{ex} \cite{HKT2} \label{dp}
Let $X=X(n)$ denote the moduli space of smooth marked del Pezzo surfaces of degree $9-n$
for $4 \le n \le 8$.
Recall that a del Pezzo surface $S$ of degree $9-n$ is isomorphic to the blowup of $n$ points in $\bP^2$ which are in general position (i.e. no $2$ points coincide, no $3$ are collinear, no $6$ lie on a conic, etc).
A \emph{marking} of $S$ is an identification of the lattice $H^2(S,\bZ)$ with the standard lattice $\bZ^{1,n}$ of signature $(1,n)$ such that $K_S \mapsto -3e_0+e_1+\cdots+e_n$. It corresponds to a
realisation of $S$ as a blowup of $n$ ordered points in $\bP^2$.
Hence $X(n)$ is an open subvariety of $X(3,n)$ (because $X(3,n)$ is the moduli space of $n$
points in $\bP^2$ in \emph{linear} general position).
The lattice $K_S^{\perp} \subset H^2(X,\bZ)$ is isomorphic to the lattice $E_n$ (with negative definite intersection product). So the Weyl group $W=W(E_n)$ acts on $X(n)$ by changing the marking.
The action of the Weyl group $W$ on $X$ induces an action on the lattice $N$ of 
$1$-parameter subgroups of $T$ which preserves the tropical variety $\cA$ of $X$ in $N_{\bR}$. 
The link $L$ of $0 \in \cA$ is described in \cite[\S 7]{HKT2} in terms of sub root systems of $E_n$
for $n \le 7$. 

In \cite{HKT2} we showed that for $n \le 7$ the embedding $X \subset T$ of $X$ in its intrinsic torus is sch\"on  and described a tropical compactification $\oX \subset Y$ explicitly. The intersection $\oX \cap O$ is connected for each orbit $O \subset Y$.
So $L$ has only top reduced rational homology by Thm.~\ref{mainthm}, and 
$H_{d-1}(L,\bC)=\Gr^W_{2d}H^d(X(n),\bC)$ where $d= \dim X(n) = 2n-8$ by Cor.~\ref{connected}.
\end{ex}

\begin{ex}\cite{MY}
Let $\tilde{X} \subset (\bC^{\times})^{mn}$ be the space of matrices of size $m \times n$ and 
rank $\le 2$ with nonzero entries. (Thus $\tilde{X}$ is the zero locus of the $3 \times 3$ minors of the matrix.)
Let $X \subset T$ be the quotient of $\tilde{X} \subset (\bC^{\times})^{mn}$ by the torus
$(\bC^{\times})^m \times (\bC^{\times})^n$ acting by scaling rows and columns.
In \cite{MY} it was shown that the link $L$ of the origin in the tropical variety $\cA$ of 
$X \subset T$ is homotopy equivalent to a bouquet of top dimensional spheres.
Here we give an algebro-geometric interpretation of this result.

A point of $X$ corresponds to $n$ collinear points $\{p_i\}$ in the big torus in 
$\bP^{m-1}$, modulo simultaneous translation by the torus.
Let $f \colon X' \rightarrow X$ denote the space of lines through the points $\{p_i\}$.
The morphism $f$ is a resolution of $X$ with exceptional locus $\Gamma \simeq \bP^{m-2}$ over the singular point $P \in X$ where the $p_i$ all coincide.
Given a point $(C \subset \bP^{m-1},\{p_i\})$ of $X'$, 
let $q_j$ be the intersection of $C$ with the $j$th coordinate hyperplane. 
We obtain a pointed smooth rational curve $(C,\{p_i\},\{q_j\})$ such that $p_i \neq q_j$ for all 
$i$ and $j$, and the $q_j$ do not all coincide. 
Conversely, given such a pointed curve $(C,\{p_i\},\{q_j\})$, let $F_j$ be 
a linear form on $C \simeq \bP^1$ defining $q_j$. Then we obtain a linear embedding
$$F=(F_1:\cdots:F_m) \colon C \subset \bP^{m-1}$$
which is uniquely determined up to translation by the torus. 

We construct a compactification $X \subset \oX$ using a moduli space of pointed curves.
Let $\oX'$ denote the (fine) moduli space of pointed curves $(C,\{p_i\}_{1}^n,\{q_j\}_{1}^m)$ 
where $C$ is a proper connected nodal curve of arithmetic genus $0$ (a union of smooth rational curves such that the dual graph is a tree) and the $p_i$ and $q_j$ are smooth points of 
$C$ such that
\begin{enumerate}
\item $p_i \neq q_j$ for all $i$ and $j$. 
\item Each end component of $C$ contains at least one $p_i$ and one $q_j$, 
and each interior component of $C$ contains either a marked point or at least $3$ nodes. 
\item The $q_j$ do not all coincide.
\end{enumerate}
(The moduli space $\oX'$ can be obtained from $\oM_{0,n+m}$ as follows: for each boundary divisor 
$\Delta_{I_1,I_2}=\oM_{0,I_1 \cup \{*\}} \times \oM_{0,I_2 \cup \{*\}}$ we contract the $i$th factor to a point if $I_i \subseteq [1,n]$ or 
$I_i\subsetneq [n+1,n+m]$.)
Define the \emph{boundary} $B$ of $\oX'$ to be the locus where the curve $C$ is reducible.
It follows by deformation theory that $\oX'$ is smooth with normal crossing boundary $B$.
The construction of the previous paragraph defines an identification $X'= \oX' \setminus B$.
The desired compactification $X \subset \oX$ is obtained from $X' \subset \oX'$ 
by contracting $\Gamma \subset X'$.

Assume without loss of generality that $m \le n$.
Consider the resolution $f \colon X' \rightarrow X$ of $X$ with exceptional locus 
$\Gamma \simeq \bP^{m-2}$ described above. 
By \cite[Thm.~II.1.1*]{GoM} since $2\dim \Gamma \le \dim X$ and $X$ is affine it follows that $X'$ has the homotopy type of a CW complex of dimension $\dim X$.
Hence by Thm.~\ref{homK} the dual complex $K$ of the boundary $B$ has only top rational homology, and $\tilde{H}_{d-1}(K,\bC)=\Gr_{2d}H^d(X',\bC)$ where $d = \dim X' = m+n-3$.

The compactification $\oX$ of $X$ is a tropical compactification $\oX \subset Y$ of $X \subset T$ such that $\oX \cap O$ is connected for each orbit $O \subset Y$.
This is proved using the general result \cite[2.10]{HKT2}.
The toric variety $Y$ corresponds to the fan $\Sigma$ with support $\cA$ given by \cite[2.11]{MY}.
In particular, it follows that $K$ is a triangulation of the link $L$.
Hence we obtain an alternative proof that $L$ has only top reduced rational homology, and a
geometric interpretation of the top homology group.
\end{ex}

\medskip

\noindent
\textbf{Acknowledgements}:
I would like to thank J.~Tevelev for allowing me to include his unpublished result 
Thm.~\ref{expdim} and for many helpful comments.
I would also like to thank S.~Keel, S.~Payne, D.~Speyer, and B.~Sturmfels for useful discussions.
The author was partially supported by NSF grant DMS-0650052.

\medskip
\noindent
Paul Hacking,  Department of Mathematics, University of Washington, Box 354350, Seattle, WA~98195; 
\texttt{hacking@math.washington.edu} \\
\\

\end{document}